\documentclass[11pt]{article}

\usepackage{amsthm,amsfonts,amssymb,epsfig,graphics,amsbsy,subfigure}
\setlength{\textwidth}{6.5in}
\setlength{\topmargin}{0pt}
\setlength{\voffset}{-.25in}
\setlength{\evensidemargin}{0pt}
\setlength{\oddsidemargin}{0pt}
\setlength{\hoffset}{0pt}
\setlength{\textheight}{600pt}

\parindent=0pt                  
\parskip=0.15 true in
\usepackage{color}		
\usepackage{epsfig}


\usepackage{color}		
\usepackage{epsfig}
\usepackage{amssymb}
\usepackage{amsmath}
\usepackage{graphicx,subfigure}

\newcommand{\intR}{\int\limits_{\mathbb{R}} }

\newcommand{\intL}{\int\limits }
\newcommand{\half}{^\infty_0 }
\newcommand{\RR}{\mathbb{R}} 
\newcommand{\xx}{\mathbf{x}} 
\newcommand{\yy}{\mathbf{y}} 
\newcommand{\zz}{\mathbf{z}} 
\newcommand{\uu}{\mathbf{u}} 
\newcommand{\aalpha}{\boldsymbol{\alpha}} 

\newtheorem{thm}{Theorem}[section]
\newtheorem{defi}[thm]{Definition}

\newtheorem{cor}[thm]{Corollary}

\newtheorem{prop}[thm]{Proposition}

\title{Inversion of the elliptical Radon transform arising in migration imaging using the regular Radon transform}

\author{Sunghwan Moon$^a$\footnote{shmoon@unist.ac.kr} and Joonghyeok Heo$^b$}
\date{\small $^a$Department of Mathematical Sciences, Ulsan National Institute of Science and Technology (UNIST), Ulsan 44919, Korea \\
$^b$UM Energy Institute, 
University of Michigan, Ann Arbor, MI 48109, U.S.}

\begin{document}
\maketitle
\begin{abstract}
 In recent years, many types of elliptical Radon transforms that integrate functions over various sets of  ellipses/ellipsoids have been considered, relating to studies in bistatic synthetic aperture radar, ultrasound reflection tomography, radio tomography, and migration imaging. 
In this article, we consider the transform that integrates a given function in $\RR^n$ over a set of ellipses (when $n=2$) or ellipsoids of rotation (when $n\geq 3$) with foci restricted to a hyperplane. 
We show a relation between this elliptical Radon transform and the regular Radon transform, and provide the inversion formula for the elliptical Radon transform using this relation.
Numerical simulations are performed to demonstrate the suggested algorithms in two dimensions, and these simulations are also provided in this article. 

\end{abstract}
\textbf{Keywords:} 
Radon transform; tomography; elliptical; migration; seismic reflection

\textbf{AMS Subject Classification:} 44A12; 65R10; 92C55



\section{Introduction}

Radon-type transforms that assign to a given function its integrals over various sets of ellipses/ellipsoids arise in migration imaging under an assumption that the medium is acoustic and homogeneous. 
The aim of migration is to construct an image of the inside of the earth from seismic reflections recorded at its surface~\cite{gazdags84,robinson83}.
A graphical approach called classical migration was developed systematically by Hagedoorn \cite{hagedoorn54}.
Classical migration had been abandoned after the wave-equation method was introduced by Claerbout~\cite{claerbout71} in 1971. 
Gazdag and Sguazzero pointed out that the classical migration procedures that existed at that time were not based on a completely sound theory~\cite{gazdags84}.
However, a correct construction for the wave-equation method was often difficult to find because the experiment data did not fit into a single wave equation.
To adapt the diffraction stack to borehole seismic experiments, a new approach to seismic migration was found.
This approach gave classical migration a sound theory.
After that discovery, classical migration has attracted many researchers in the field.
The underlying idea is that seismic data in the far field can be regarded as if the data are coming from integrals of the earth's acoustic scattering potential over surfaces determined by the velocity model~\cite{millerob87}.
These Radon-type transforms relate to migration imaging as well as Bistatic Synthetic Aperture Radar (BiSAR) \cite{ambartsoumianfknq11,cokert07,krishnanq11,krishnanlq12,yarmanyc08}, Ultrasound Reflection Tomography (URT) \cite{ambartsoumiankq11,gouiaa12,roykcm14}, and radio tomography \cite{wilsonp09,wilsonp10,wilsonpv09}.

Because of these applications, there have been several papers devoted to the topic of elliptical Radon transforms. 
The family of ellipses with one focus fixed at the origin and the other one moving along a given line was 
considered in~\cite{krishnanlq12}. In the same paper, the family of ellipses with a fixed focal distance was 
also studied. 
The authors of~\cite{ambartsoumiankq11,gouiaa12} dealt with the case of circular acquisition, when the two foci of ellipses 
with a given focal distance are located on a given circle. A family of ellipses with two moving foci was 
also handled in~\cite{cokert07}. 
Radio tomography, which uses a wireless network of radio transmitters and receivers to image the distribution of 
attenuation within the network, was discussed in~\cite{wilsonp09,wilsonp10,wilsonpv09}.
They approximated the obtained signal by the volume integral of the attenuation over this ellipsoid.
One work \cite{smoon,moonert14} derived two inversion formulas of this volume integral of the attenuation over this ellipsoid and studied its properties.
Many works found an approximate inversion for elliptical Radon transforms. 

Here we consider migration imaging and introduce a new type of an elliptical Radon transform obtained by restricting the position of the source and receiver in migration imaging.
We find an explicit inversion formula for this elliptical Radon transform arising in migration imaging which is the line/area integral of the function over the ellipse/ellipsoid with foci restricted to a hyperplane.

The rest of this paper is organized as follows. The problem of interest is stated precisely and the elliptical Radon transform is formulated in section~\ref{formulation}. In section 3, we show how to reduce the elliptical Radon transform to the regular Radon transform.
The numerical simulation to demonstrate the suggested 2-dimensional algorithm is presented in section 4.

\section{Formulation of the problem}\label{formulation}
Let $\mathbf s\in\RR^3$ and $\mathbf r\in\RR^3$ represent 3-dimensional source and receiver positions, respectively.
For fixed points $(\mathbf s,\mathbf r)$, an isochron surface $I_{(\mathbf s,\mathbf r,t)}$ \footnote{Hagedoorn called this set a surface of maximum concavity (see~\cite{hagedoorn54}).} is a surface consisting of image points $\xx=(x_1,x_2,x_3)\in\RR^3$ associated by the travel time function $\tau(\xx,\yy)$, which gives the travel time $t\in[0,\infty)$ between two points $\xx\in\RR^3$ and $\yy\in\RR^3$ with a known velocity $v(\xx)$ (see \cite{millerob87}).

Mathematically, $I_{(\mathbf s,\mathbf r,t)}$ can be described as the set of the image points $\xx$ satisfying the constraint that the total travel time from the source $\mathbf s$ though the image point $\xx$ to the receiver $\mathbf r$ is constant and equal to $t$.
The isochron surface $I_{(\mathbf s,\mathbf r,t)}$ can be represented as
$$
I_{(\mathbf s,\mathbf r,t)}=\{\xx\in\RR^3:t=\tau(\xx,\mathbf s)+\tau(\xx,\mathbf r)\}.
$$

Seismic experiments yield data $g(\mathbf s,\mathbf r,t)$ which are functions of the source position $\mathbf s$, receiver position $\mathbf r$, and time $t$.
Assuming an object function $f(\xx)$ on $\RR^3$, the data $g$ is modelled by the integral of $f$ over $I_{(\mathbf s,\mathbf r,t)}$, i.e.,
$$
g(\mathbf s,\mathbf r,t)=\intL_{I_{(\mathbf s,\mathbf r,t)}}f(\xx)d\xx\quad(\mbox{see \cite{millerob87}}).
$$
The two detectors have nonzero sizes and time is also passing, so it is reasonable to assume that what we measure is the ``average''-concentrated near the location of the detectors and nearly zero sufficiently far away from the detectors-over a small region of space and a small time interval preceding the time $t$.
In mathematical terms, our data can be written as
$$
g(\mathbf s,\mathbf r,t)=\intL_{\RR^3}f(\xx)\delta(\tau(\xx,\mathbf s)+\tau(\xx,\mathbf r)-t)d\xx,
$$
where $\delta$ is the Dirac delta function. 
(Most works~\cite{ambartsoumiankq11,ambartsoumianfknq11,cokert07,gouiaa12,krishnanq11,krishnanlq12} dealing with an elliptical Radon transform consider the arc length measure instead.)
Miller, Oristaglio, and Beylkin suggested this model and approximately recovered the object function $f$ by appropriately weighted back projection of the data in \cite{millerob87}.
However, we present the explicit formula for reconstructing $f$ using the specific minimum data of $g$ by changing the variables of positions of the two detectors.

The total dimension of the data $g$ is 7. To reduce the overdeterminacy, we assume that $\mathbf s$ and $\mathbf r$ are located on a hyperplane, say $x_3=0$ and also that the difference vector $\mathbf {s-r}$ is parallel to a given line, say the $x_1$-axis. (In 2 dimensions, the difference vector $\mathbf {s-r}$ is automatically parallel to the $x_1$-axis because the hyperplane is the line.)
Hence, for any $u_2\in\RR$, we write $\mathbf s=(s,u_2,0)$ and $\mathbf r=(r,u_2,0)$.
Also, assuming a velocity $v(\xx)$ is set to a constant 1, $\tau(\xx,\yy)$ becomes the distance $|\xx-\yy|$ between $\xx$ and $\yy$.
Since $\xx\in I_{(\mathbf s,\mathbf r,t)}$ satisfies $t=|\xx-(s,u_2,0)|+|\xx-(r,u_2,0)|$, a point $\xx\in I_{(\mathbf s,\mathbf r,t)}$ can be described by the ellipsoid equation
$$
\frac{(x_1-(s+r)/2)^2}{t^2/4}+\frac{(x_2-u_2)^2}{t^2/4-(r-s)^2/4}+\frac{x_3^2}{t^2/4-(r-s)^2/4}=1.
$$ 
Let us choose a constant $a$ between 0 and 1.
{\color{black}We consider the fixed value of $r$ satisfying the equation $r=s+ta$, which is a new additional requirement.}
Then a point $\xx\in I_{(\mathbf s,\mathbf r,t)}$ can be also described by the equation
$$
\frac{(x_1-{\color{black}(s+r)/2})^2}{1/4}+\frac{(x_2-u_2)^2}{(1-a^2)/4}+\frac{x_3^2}{(1-a^2)/4}=t^2.
$$ 
If we set $u_1=(s+r)/2$, then our data become
$$
\begin{array}{ll}
&g((u_1-ta,u_2,0),(u_1+ta,u_2,0),t)\\
&\displaystyle=\intL_{\RR^3}f(\xx) \delta\left(\sqrt{\frac{(x_1-u_1)^2}{1/4}+\frac{(x_2-u_2)^2}{(1-a^2)/4}+\frac{x_3^2}{(1-a^2)/4}}-t\right)d\xx\\
&:={\color{black}R_{E,a}}f(\uu,t),
\end{array}
$$
where $\uu=(u_1,u_2)\in\RR^2$.
{\color{black}Here, we restrict the positions of the source and receiver in migration imaging, say, $(u_1-ta,u_2,0)$ and $(u_1+ta,u_2,0)$ for $\uu\in\RR^2$.
In general, reducing the space of positions for two devices is more useful and economical.}
If the function is odd with respect to $x_3$, then ${\color{black}R_{E,a}}f$ is equal to zero. 
We thus assume the function $f$ to be even with respect to $x_3$: $f(\xx',x_3)=f(\xx',-x_3)$ where $\xx=(\xx',x_3)\in\RR^{2}\times\RR$.
We call ${\color{black}R_{E,a}}f$ an elliptical Radon transform, since this is the surface area integral of $f$ over the set of these ellipsoids.
We generalize this $3$-dimension setup to $n$-dimension and define a more general form of an elliptical Radon transform.
\begin{defi}
Let $a_1,a_2,\cdots,a_n>0$ be given numbers, let $A:=diag(a_1,\cdots,a_n)$ denote the $n\times n$ diagonal matrix with diagonal entries $a_i$, and let $f$ be a locally integrable function on $\RR^n$, even with respect to $x_n$.
The elliptical Radon transforms of $f$ is defined by
$$
{\color{black}R_{E,A}}f(\uu,t)=\intL_{\RR^n}f(\xx) \delta\left(\left|A^{-1}(\xx-(\uu,0))\right|-t\right)d\xx, \quad\mbox{for } (\uu,t)\in\RR^{n-1}\times[0,\infty).
$$
\end{defi}
Note that we do not need the condition $a_i<1$ any more.
If all $a_i$, $i=1,2,\cdots,n$, are equal to 1, the elliptical Radon transform ${\color{black}R_{E,A}}f$ becomes the spherical Radon transform with the centers of the sphere of integration located on the hyperplane, a well-studied transform (see~\cite{andersson88,beltukov09,couranth62,fawcett85,lavrentievrv70,klein03,narayananr10,nattererw01,nilsson97,norton80,palamodov04,reddingn01,yarmany07,yarmany11}).

\section{Inversion formula}\label{defpre}
Here we assume that the object function $f$ does not touch the detectors; that is, the support of $f$ does not intersect the hyperplane $x_n=0$ where the two detectors are located.

To obtain an inversion formula for the elliptical Radon transform, we manipulate ${\color{black}R_{E,A}}f$.
By changing the variables $A^{-1}(\xx-(\uu,0))\to\bar{\xx}$, we can write 
\begin{equation}\label{eq:defiref}
\begin{array}{ll}
{\color{black}R_{E,A}}f(\uu,t)&\displaystyle=|\mathbf a|_1\intL_{\RR^n}f(A\bar\xx +(\uu,0))\delta(|\bar\xx|-t)d\bar\xx\\
&\displaystyle=|\mathbf a|_1t^{n-1}\intL_{|\yy|=1}f(A \yy t+(\uu,0))dS(\yy)\\
&=\displaystyle2|\mathbf a|_1t^{n-1}\intL_{|\yy'|\leq 1}f(A (\yy',\sqrt{1-|\yy'|^2})t+(\uu,0))\frac{d\yy'}{\sqrt{1-|\yy'|^2}},
\end{array}
\end{equation}
where $\yy=(\yy',y_n)\in\RR^n$ and $|\mathbf a|_1=a_1a_2\cdots a_n$. 
Here $dS(\yy)$ is the area measure of the unit sphere.

Let $\mathfrak A=\{\zz=(\zz',z_n)=(z_1,z_2,\cdots,z_{n-1},z_n)\in\RR^n:0\leq |\zz'|^2\leq z_n\}$ and $\mathfrak B=\{\xx=(\xx',x_n)=(x_1,x_2,\cdots,x_{n-1},x_n)\in\RR^n:x_n\geq 0\}$.
We define a map ${\color{black}m_{n,A}}:\mathfrak A\to\mathfrak  B$ by
$$
{\color{black}m_{n,A}}(\zz)=(\bar A\zz',a_n\sqrt{z_n-|\zz'|^2}),
$$
where $\bar A:=diag(a_1,\cdots a_{n-1})$ is the $n-1\times n-1$ diagonal matrix.
\begin{prop}\label{prop:property}\indent
\begin{itemize}
\item The map ${\color{black}m_{n,A}}:\mathfrak A\to \mathfrak B$ is a bijection with the inverse map ${\color{black}m_{n,A}}^{-1}:\mathfrak B\to \mathfrak A$ defined by
$$
{\color{black}m_{n,A}}^{-1}(\xx)=(\bar A^{-1}\xx',|A^{-1}\xx|^2).
$$
\item We have 
$$
\begin{array}{ll}
\{{\color{black}m_{n,A}}^{-1}(\xx)\in \RR^n:|A^{-1}(\xx-(\uu,0))|=t,\xx\in \mathfrak B\}\\
\displaystyle=\left\{\zz\in \mathfrak A:\zz\cdot\frac{(-2\bar A^{-1}\uu,1)}{\nu_{\bar A}(\uu)}=\frac{t^2-|\bar A^{-1}\uu|^2}{\nu_{\bar A}(\uu)}\right\},
\end{array}
$$
where $\nu_{\bar A}(\uu)=\sqrt{1+4|\bar A^{-1}\uu|^2}$.
\end{itemize}

\end{prop}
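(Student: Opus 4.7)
\textbf{Plan for Proposition \ref{prop:property}.}

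For the first bullet (bijectivity), the plan is to verify directly that the two stated maps are two-sided inverses. First I will check that $m_{n,A}(\zz)$ actually lands in $\mathfrak B$: the last coordinate is $a_n\sqrt{z_n-|\zz'|^2}\ge 0$ because the definition of $\mathfrak A$ forces $z_n\ge|\zz'|^2$. Then I will check that $m_{n,A}^{-1}(\xx)$ lands in $\mathfrak A$: the last coordinate minus the squared norm of the first $n-1$ coordinates equals $|A^{-1}\xx|^2-|\bar A^{-1}\xx'|^2=x_n^2/a_n^2\ge 0$, so $|\zz'|^2\le z_n$ as required. Composing the two maps is then a routine computation: $m_{n,A}^{-1}(m_{n,A}(\zz))=(\bar A^{-1}\bar A\zz',\,|\zz'|^2+(z_n-|\zz'|^2))=\zz$, and in the other direction $m_{n,A}(m_{n,A}^{-1}(\xx))=(\bar A\bar A^{-1}\xx',\,a_n\sqrt{x_n^2/a_n^2})=(\xx',x_n)=\xx$, where the last step uses $x_n\ge 0$ on $\mathfrak B$ (so no absolute-value issue arises).

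For the second bullet, the plan is to rewrite the defining equation $|A^{-1}(\xx-(\uu,0))|=t$ so that the quadratic-in-$\xx$ terms get absorbed into the new coordinate $z_n=|A^{-1}\xx|^2$. Expanding,
\begin{equation*}
|A^{-1}(\xx-(\uu,0))|^2=|\bar A^{-1}(\xx'-\uu)|^2+x_n^2/a_n^2=|A^{-1}\xx|^2-2\,\bar A^{-1}\xx'\cdot\bar A^{-1}\uu+|\bar A^{-1}\uu|^2,
\end{equation*}
so setting this equal to $t^2$ and substituting $\zz'=\bar A^{-1}\xx'$, $z_n=|A^{-1}\xx|^2$ yields the linear (in $\zz$) relation
\begin{equation*}
z_n-2\,\zz'\cdot\bar A^{-1}\uu+|\bar A^{-1}\uu|^2=t^2,\qquad\text{i.e.}\qquad \zz\cdot(-2\bar A^{-1}\uu,1)=t^2-|\bar A^{-1}\uu|^2.
\end{equation*}
Dividing by $\nu_{\bar A}(\uu)=\sqrt{1+4|\bar A^{-1}\uu|^2}=|(-2\bar A^{-1}\uu,1)|$ gives the normalized Hesse form claimed. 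The two set inclusions then follow because the bijectivity from the first bullet gives a one-to-one correspondence: a point $\zz\in\mathfrak A$ satisfies the linear equation if and only if its image $\xx=m_{n,A}(\zz)\in\mathfrak B$ satisfies the ellipsoid equation.

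There is no real obstacle; the content of the proposition is a bookkeeping identity that converts the ellipsoidal surface in the half-space $\mathfrak B$ into an affine hyperplane slice of the paraboloidal region $\mathfrak A$, with the division by $\nu_{\bar A}(\uu)$ serving to turn the normal vector into a unit vector (which will matter later when ${\color{black}R_{E,A}}f$ is compared with the regular Radon transform). The only care needed is to keep straight which coordinates are transformed by $\bar A$ versus $A$ and to confirm the signs/ranges so that the two domains $\mathfrak A$ and $\mathfrak B$ match up correctly.
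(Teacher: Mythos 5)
Your proposal is correct and follows essentially the same route as the paper: direct verification that the two maps are mutual inverses, followed by the algebraic identity $\zz\cdot(-2\bar A^{-1}\uu,1)=|A^{-1}(\xx-(\uu,0))|^2-|\bar A^{-1}\uu|^2$ under the substitution $\zz=m_{n,A}^{-1}(\xx)$, normalized by $\nu_{\bar A}(\uu)$. Your version is slightly more careful than the paper's (you check that each map lands in the claimed codomain and note that bijectivity is what upgrades the one-directional computation to the stated set equality), but the substance is identical.
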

The map ${\color{black}m_{n,A}}^{-1}$ transforms an ellipsoid into a hyperplane​. 
Changing variables using this map ${\color{black}m_{n,A}}$ plays a critical role in reducing the elliptical Radon transform to the regular Radon transform.
\begin{proof}
We can easily check that ${\color{black}m_{n,A}}^{-1}\circ {\color{black}m_{n,A}}(\zz)=\zz$ for $\zz\in \mathfrak A$ and ${\color{black}m_{n,A}}\circ {\color{black}m_{n,A}}^{-1}(\xx)=\xx$ for $\xx\in \mathfrak B$, so  ${\color{black}m_{n,A}}:A\to \mathfrak B$ is a bijection.

Consider ${\color{black}m_{n,A}}^{-1}(\xx)\cdot(-2\bar A^{-1}\uu,1)/\nu_{\bar A}(\uu)$ for $\xx\in \mathfrak B$ with $|A^{-1}(\xx-(\uu,0))|=t$:
\begin{equation}\label{eq:mn}
\begin{array}{ll}
\displaystyle {\color{black}m_{n,A}}^{-1}(\xx)\cdot\frac{(-2\bar A^{-1}\uu,1)}{\nu_{\bar A}(\uu)}=(\bar A^{-1}\xx',|A^{-1}\xx|^2)\cdot\frac{(-2\bar A^{-1}\uu,1)}{\nu_{\bar A}(\uu)}\\
\displaystyle =\frac{-2\bar A^{-1}\xx'\cdot\bar A^{-1}\uu+|A^{-1}\xx|^2}{\nu_{\bar A}(\uu)}\\
\displaystyle =\frac{|A^{-1}(\xx-(\uu,0))|^2-|\bar A^{-1}\uu|^2}{\nu_{\bar A}(\uu)}=\frac{t^2-|\bar A^{-1}\uu|^2}{\nu_{\bar A}(\uu)}.
\end{array}
\end{equation}
\end{proof}

We define the function $k(\zz)$ on $\RR^n$ by
$$
k(\zz)=\left\{\begin{array}{ll}\displaystyle\frac{f\circ {\color{black}m_{n,A}}(\zz)}{\sqrt{z_n-|\zz'|^2 }}&\mbox{ if }0\leq|\zz'|^2<z_n,\\
0&\mbox{ otherwise,}\end{array}\right.
$$
where $\zz=(\zz',z_n)\in\RR^n$. 
This is equivalent for $x_n>0$, to 
$$
f(\xx)=x_na_n^{-1} k\circ {\color{black}m_{n,A}}^{-1}(\xx)=x_na_n^{-1} k(\bar A^{-1}  \xx',|A^{-1}\xx|^2),
$$
where $\xx=(\xx',x_n)\in\RR^n$.
By the evenness of $f$, we have
\begin{equation}\label{eq:ffromk}
f(\xx)=|x_n|a_n^{-1} k(\bar A^{-1}  \xx',|A^{-1}\xx|^2).
\end{equation}
Let the regular Radon transform $Rk(\boldsymbol{e_\theta},s)$ be defined by
$$
Rk(\boldsymbol{e_\theta},s)=\intL_{\boldsymbol{e_\theta}^\perp}k(s\boldsymbol{e_\theta}+\boldsymbol\eta)d\boldsymbol\eta, \qquad (\boldsymbol{e_\theta},s)\in S^{n-1}\times\RR,
$$
where $s\in\RR$ and for $\boldsymbol\theta=(\theta_1,\theta_2,\cdots,\theta_{n-1})\in[0,2\pi)\times[0,\pi]^{n-2},$
$$
\boldsymbol{e_\theta}=
 \left(\begin{array}c\sin\theta_1\sin\theta_2\cdots\sin\theta_{n-1}\\
\cos\theta_1\sin\theta_2\cdots\sin\theta_{n-1}\\
\cos\theta_2\sin\theta_3\cdots\sin\theta_{n-1}\\
\vdots\\
\cos\theta_{n-2}\sin\theta_{n-1}\\
\cos\theta_{n-1}\end{array}\right)\in S^{n-1}.
$$
This can be represented by
\begin{equation}\label{eq:radondefi}
Rk(\boldsymbol{e_\theta},s)=
\frac1{|\cos\theta_{n-1}|}\intL_{\RR^{n-1}}k\left(\zz',-\frac{\boldsymbol{e_\theta'}\cdot\zz'}{\cos\theta_{n-1}}+\frac s{\cos\theta_{n-1}}\right)d\zz',
\end{equation}
for $\theta_{n-1}\neq\pi/2$ and $\boldsymbol{e_\theta}=(\boldsymbol{e_\theta'},\cos\theta_{n-1})\in S^{n-1}$. 
The following theorem shows the relation between ${\color{black}R_{E,A}}f$ and $Rk$.
\begin{thm}\label{thm:mainrelation}
Let $f\in C(\RR^n)$ be even in $x_n$ and have compact support in $\RR^{n-1}\times\RR\setminus\{0\}$.
Then we have 
\begin{equation}\label{eq:mainrel}
{\color{black}R_{E,A}}f(\uu,t)=\frac{2|\mathbf a|_1t}{\nu_{\bar A}(\uu)}Rk\left(\frac{(-2\bar A^{-1}\uu,1)}{\nu_{\bar A}(\uu)},\frac{t^2-|\bar A^{-1}\uu|^2}{\nu_{\bar A}(\uu)}\right).
\end{equation}
Again, $\nu_{\bar A}(\uu)=\sqrt{1+4|\bar A^{-1}\uu|^2}$.
\end{thm}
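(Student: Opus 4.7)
The plan is to directly change variables in the delta-function representation of $R_{E,A}f$ using the bijection $m_{n,A}$ from Proposition~\ref{prop:property}, which sends the ellipsoid $|A^{-1}(\xx-(\uu,0))|=t$ in the upper half space $\mathfrak B$ to a hyperplane in $\mathfrak A$. Since $f$ is even in $x_n$ and vanishes near $x_n=0$, I would first write
\[
R_{E,A}f(\uu,t)=2\intL_{\mathfrak B}f(\xx)\,\delta\bigl(|A^{-1}(\xx-(\uu,0))|-t\bigr)\,d\xx,
\]
and then substitute $\xx=m_{n,A}(\zz)=(\bar A\zz',a_n\sqrt{z_n-|\zz'|^2})$ for $\zz\in\mathfrak A$. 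The Jacobian matrix is block triangular: the top $(n-1)\times(n-1)$ block is $\bar A$, and the entry $\partial x_n/\partial z_n=a_n/(2\sqrt{z_n-|\zz'|^2})$ sits in the corner, so the absolute determinant equals $|\mathbf a|_1/(2\sqrt{z_n-|\zz'|^2})$. The resulting factor $1/\sqrt{z_n-|\zz'|^2}$ exactly cancels the denominator in the definition of $k$, reducing the expression to $|\mathbf a|_1\!\int_{\RR^n}k(\zz)\,\delta(\rho-t)\,d\zz$ (extending the integrand by zero outside $\mathfrak A$), where $\rho=|A^{-1}(m_{n,A}(\zz)-(\uu,0))|$.

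Next I would turn this into a hyperplane integral. A direct computation, already carried out in the proof of Proposition~\ref{prop:property} (equation~\eqref{eq:mn}), shows that
\[
\rho^2 \;=\; z_n - 2\zz'\cdot\bar A^{-1}\uu + |\bar A^{-1}\uu|^2 \;=\; \zz\cdot(-2\bar A^{-1}\uu,1) + |\bar A^{-1}\uu|^2,
\]
which is \emph{affine} in $\zz$. The key technical move is to convert $\delta(\rho-t)$ into a delta of this affine function using $\delta(\rho-t)=2t\,\delta(\rho^{2}-t^{2})$, valid for $t>0$ and $\rho\ge 0$. This rewrites
\[
\delta(\rho-t)=2t\,\delta\!\bigl(\zz\cdot(-2\bar A^{-1}\uu,1)-(t^{2}-|\bar A^{-1}\uu|^{2})\bigr),
\]
and a further rescaling by the length of the normal vector $(-2\bar A^{-1}\uu,1)$, namely $\nu_{\bar A}(\uu)$, turns the delta into one based on the \emph{unit} normal $\boldsymbol{e}=(-2\bar A^{-1}\uu,1)/\nu_{\bar A}(\uu)$, introducing an extra factor $1/\nu_{\bar A}(\uu)$.

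Combining everything yields
\[
R_{E,A}f(\uu,t)=\frac{2|\mathbf a|_{1}t}{\nu_{\bar A}(\uu)}\intL_{\RR^{n}}k(\zz)\,\delta\!\left(\zz\cdot\boldsymbol{e}-\frac{t^{2}-|\bar A^{-1}\uu|^{2}}{\nu_{\bar A}(\uu)}\right)d\zz,
\]
and the remaining integral is exactly $Rk(\boldsymbol{e},s)$ with $s=(t^{2}-|\bar A^{-1}\uu|^{2})/\nu_{\bar A}(\uu)$, giving~\eqref{eq:mainrel}. The only step requiring genuine care is verifying the Jacobian of $m_{n,A}$ and the simultaneous cancellation with $k$'s weight; once that is in place, the $\delta(\rho-t)\mapsto\delta(\rho^{2}-t^{2})$ identity does all the geometric work by converting a curved surface measure into a flat one, and the normalization of the normal vector delivers the precise arguments of $Rk$ appearing in the statement.
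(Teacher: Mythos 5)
Your proof is correct, but it takes a genuinely different technical route from the paper's. The paper never changes variables in the $n$-dimensional delta integral: it first collapses the delta to an explicit integral over the upper unit hemisphere parametrized by the disk $|\yy'|\leq 1$ (equation \eqref{eq:defiref}), substitutes $f(\xx)=|x_n|a_n^{-1}k(\bar A^{-1}\xx',|A^{-1}\xx|^2)$ so that the factor $|x_n|$ cancels the hemisphere weight $1/\sqrt{1-|\yy'|^2}$, performs the affine change $\zz'=\yy' t+\bar A^{-1}\uu$ in $\RR^{n-1}$ (Jacobian $t^{1-n}$), and finally recognizes
$2|\mathbf a|_1t\intL_{\RR^{n-1}}k(\zz',t^2-|\bar A^{-1}\uu|^2+2(\bar A^{-1}\uu)\cdot\zz')\,d\zz'$
as the graph parametrization \eqref{eq:radondefi} of $Rk$, which is where the factor $1/\nu_{\bar A}(\uu)$ enters. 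You instead make a single $n$-dimensional substitution $\xx=m_{n,A}(\zz)$, whose Jacobian $|\mathbf a|_1/(2\sqrt{z_n-|\zz'|^2})$ cancels against the weight built into $k$, and then do the remaining bookkeeping with delta-function identities: $\delta(\rho-t)=2t\,\delta(\rho^2-t^2)$ supplies the factor $2t$, and rescaling the affine argument by $|(-2\bar A^{-1}\uu,1)|=\nu_{\bar A}(\uu)$ supplies the $1/\nu_{\bar A}(\uu)$. The two computations generate the same constants by different means. Your version is more conceptual --- it makes explicit why $m_{n,A}$ and the weight in $k$ were chosen, since $\rho^2$ becomes affine in $\zz$ --- while the paper's avoids distributional manipulations entirely at the cost of an extra intermediate parametrization. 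The only steps in your argument needing care, the identity $\delta(\rho-t)=2t\,\delta(\rho^2-t^2)$ for $t>0$, $\rho\geq 0$, and pulling the delta through the diffeomorphism $m_{n,A}$, are both standard and are legitimate here because $f$ (hence $k$) is supported away from the set where $m_{n,A}$ degenerates; your Jacobian computation is also correct.
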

\begin{proof}
Combining two equations (\ref{eq:defiref}) and (\ref{eq:ffromk}), we have
$$
{\color{black}R_{E,A}}f(\uu,t)=\displaystyle2t^{n}|\mathbf a|_1\intL_{|\yy'|\leq 1}k(\yy' t+\bar A^{-1}  \uu,|\yy' t+\bar A^{-1} \uu |^2+(1-|\yy'|^2)t^2)d\yy'.
$$
Let us consider the variable change
$$
\zz'=\yy't+\bar A^{-1} \uu\qquad\left(\Longleftrightarrow \yy'=\frac{\zz'-\bar A^{-1}  \uu}t\right).
$$
The Jacobian of this transformation is $t^{1-n}$, so
$$
{\color{black}R_{E,A}}f(\uu,t)=\displaystyle2|\mathbf a|_1t\intL_{|\zz'-\bar A^{-1}\uu|\leq t}k(\zz',|\zz'|^2+t^2-|\zz'-\bar A^{-1}\uu|^2)d\zz'.
$$
Since $k$ has compact support in $\{\zz\in\RR^2:|\zz'|^2<z_n\}$, we have
\begin{equation}\label{eq:refrk}
\begin{array}{ll}
{\color{black}R_{E,A}}f(\uu,t)&\displaystyle=2|\mathbf a|_1t\intL_{\RR^{n-1}}k(\zz',|\zz'|^2+t^2-|\zz'-\bar A^{-1}\uu|^2)d\zz'\\
&=\displaystyle2|\mathbf a|_1t\intL_{\RR^{n-1}}k(\zz',t^2-|\bar A^{-1}\uu|^2+2(\bar A^{-1}\uu)\cdot\zz')d\zz'.
\end{array}
\end{equation}
We recognize the right hand side as the integral along the hyperplane perpendicular to 
$$
(-2\bar A^{-1}\uu,1)/\nu_{\bar A}(\uu)
$$
with (signed) distance
$$
(t^2-|\bar A^{-1}\uu|^2)/\nu_{\bar A}(\uu)
$$
from the origin.
In this case, the measure for the hyperplane becomes $\nu_{\bar A}(\uu)d\zz'.$
Setting 
$$
\boldsymbol{e_\theta}=(-2\bar A^{-1}\uu,1)/\nu_{\bar A}(\uu),\qquad\mbox{and}\qquad s=(t^2-|\bar A^{-1}\uu|^2)/\nu_{\bar A}(\uu)
$$
in equation~\eqref{eq:radondefi} we have the desired formula from equation~\eqref{eq:refrk}.
\end{proof}

Using the projection slice theorem for the regular Radon transform, we obtain an analog of the projection slice theorem for the elliptical Radon transform ${\color{black}R_{E,A}}f$:
\begin{thm}\label{thm:projectionslice}
Let $f\in C^\infty(\RR^n)$ be even in $x_n$ and have compact support in $\RR^{n-1}\times\RR\setminus\{0\}$.
Then we have for $(\boldsymbol\alpha,\beta)\in\RR^{n-1}\times\RR$,
$$
\hat k(\boldsymbol\alpha,\beta)=|\mathbf a|_1^{-1}e^{i\frac{|\boldsymbol\alpha|^2}{4\beta}}\intL^\infty_0{\color{black}R_{E,A}} f\left(-\frac{\bar A \boldsymbol\alpha}{2\beta},t\right)e^{-i\beta t^2}dt
$$
where $\hat k$ is the $n$-dimensional Fourier transform of $k$. 
\end{thm}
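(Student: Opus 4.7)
The strategy is to reduce this statement to the classical projection slice theorem for the regular Radon transform, using Theorem~\ref{thm:mainrelation} as the bridge. The plan is to substitute \eqref{eq:mainrel} into the integral on the right-hand side and then change the $t$-variable to $s$ via $s = (t^2 - |\bar A^{-1}\uu|^2)/\nu_{\bar A}(\uu)$, whereupon it should collapse into a one-dimensional Fourier integral of $Rk$ against $e^{-i\sigma s}$, which the classical slice theorem identifies with $\hat k(\sigma \boldsymbol\omega)$. The geometric point is then to choose $\uu$ so that the resulting $\sigma\boldsymbol\omega$ is exactly $(\boldsymbol\alpha,\beta)$.

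Concretely, I would set $\uu = -\bar A\boldsymbol\alpha/(2\beta)$, so that $\bar A^{-1}\uu = -\boldsymbol\alpha/(2\beta)$ and, writing $\nu := \nu_{\bar A}(\uu) = \sqrt{1 + |\boldsymbol\alpha|^2/\beta^2}$, the direction appearing in Theorem~\ref{thm:mainrelation} becomes $\boldsymbol\omega := (\boldsymbol\alpha/\beta,\,1)/\nu$. The substitution $s = s(t) := (t^2 - |\boldsymbol\alpha|^2/(4\beta^2))/\nu$ gives $2t\,dt = \nu\,ds$, which precisely absorbs the prefactor $2|\mathbf a|_1 t/\nu$ in \eqref{eq:mainrel} and leaves a clean factor $|\mathbf a|_1$. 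The phase decomposes as $\beta t^2 = \beta\nu s + |\boldsymbol\alpha|^2/(4\beta)$, so $e^{-i|\boldsymbol\alpha|^2/(4\beta)}$ pulls out of the integral, which matches the exponential in the theorem after cross-multiplying. With $\sigma := \beta\nu$ one verifies $\sigma\boldsymbol\omega = (\boldsymbol\alpha,\beta)$, so the classical projection slice theorem $\int_\RR Rk(\boldsymbol\omega,s)\, e^{-i\sigma s}\,ds = \hat k(\sigma\boldsymbol\omega)$ produces $\hat k(\boldsymbol\alpha,\beta)$, and rearrangement yields the claim.

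The one non-routine step is justifying the use of the classical slice theorem, since the natural change of variable only gives an $s$-integral from $s_0 := s(0) = -|\boldsymbol\alpha|^2/(4\beta^2\nu)$ to $\infty$, not over all of $\RR$. I must show $Rk(\boldsymbol\omega,s)=0$ for $s<s_0$. Because $\operatorname{supp} k \subset \mathfrak A = \{z_n \geq |\zz'|^2\}$, every $\zz$ in the support satisfies
$$\nu\,\zz\cdot\boldsymbol\omega \;=\; (\boldsymbol\alpha/\beta)\cdot\zz' + z_n \;\geq\; (\boldsymbol\alpha/\beta)\cdot\zz' + |\zz'|^2 \;=\; \bigl|\zz' + \boldsymbol\alpha/(2\beta)\bigr|^2 - |\boldsymbol\alpha|^2/(4\beta^2) \;\geq\; -|\boldsymbol\alpha|^2/(4\beta^2),$$
that is, $\zz\cdot\boldsymbol\omega \geq s_0$. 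Hence the hyperplanes $\{\zz\cdot\boldsymbol\omega = s\}$ with $s<s_0$ miss $\operatorname{supp} k$ entirely and their Radon integrals vanish, which allows extending the lower limit to $-\infty$. This geometric support bound, which exploits the shape of $\mathfrak A$ rather than the support of $f$ itself, is the only place where anything beyond mechanical bookkeeping is needed; the rest of the argument is substitution and cancellation.
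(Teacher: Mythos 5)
Your proposal is correct and follows essentially the same route as the paper: substitute the relation of Theorem~\ref{thm:mainrelation}, change variables from $t$ to $s=(t^2-|\bar A^{-1}\uu|^2)/\nu_{\bar A}(\uu)$ with $\uu=-\bar A\boldsymbol\alpha/(2\beta)$, invoke the classical projection slice theorem, and use the fact that $\operatorname{supp}k\subset\{|\zz'|^2\le z_n\}$ to extend the lower limit of the $s$-integral to $-\infty$. Your completing-the-square verification of that last support fact is a welcome elaboration of a step the paper merely asserts, but it is not a different argument.
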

\begin{proof}
The projection slice theorem implies
$$
\hat k(\sigma\boldsymbol{e_\theta})=\intR Rk(\boldsymbol{e_\theta},s)e^{-is\sigma}ds.
$$
To use this theorem, we multiply equation~\eqref{eq:mainrel} by $e^{-i\frac{t^2-|\bar A^{-1}\uu|^2}{\nu_{\bar A}(\uu)}\sigma}$ and integrate $\frac{t^2-|\bar A^{-1}\uu|^2}{\nu_{\bar A}(\uu)}$ from $-|\bar A^{-1}\uu|^2/\nu_{\bar A}(\uu)$ to infinity:
$$
\begin{array}{ll}
\displaystyle\intL^\infty_{-|\bar A^{-1}\uu|^2/\nu_{\bar A}(\uu)} Rk\left(\frac{(-2\bar A^{-1}\uu,1)}{\nu_{\bar A}(\uu)},\frac{t^2-|\bar A^{-1}\uu|^2}{\nu_{\bar A}(\uu)}\right)e^{-i\frac{t^2-|\bar A^{-1}\uu|^2}{\nu_{\bar A}(\uu)}\sigma}d\left(\frac{t^2-|\bar A^{-1}\uu|^2}{\nu_{\bar A}(\uu)}\right).
\end{array}
$$
Since $k$ has compact support in $\{\zz\in\RR^n:|\zz'|^2<z_n\}$, the plane perpendicular to $\frac{(-2\bar A^{-1}\uu,1)}{\nu_{\bar A}(\uu)}$ with distance from the origin less than $\frac{-|\bar A^{-1}\uu|^2}{\nu_{\bar A}(\uu)} $ does not intersect the compact support of $k$.
Hence, $\hat k(\sigma/\nu_{\bar A}(\uu)(-2\bar A^{-1}\uu,1))$ is equal to
$$
\begin{array}{ll}
&\displaystyle\intL^\infty_{-|\bar A^{-1}\uu|^2/\nu_{\bar A}(\uu)} Rk\left(\frac{(-2\bar A^{-1}\uu,1)}{\nu_{\bar A}(\uu)},\frac{t^2-|\bar A^{-1}\uu|^2}{\nu_{\bar A}(\uu)}\right)e^{-i\frac{t^2-|\bar A^{-1}\uu|^2}{\nu_{\bar A}(\uu)}\sigma}d\left(\frac{t^2-|\bar A^{-1}\uu|^2}{\nu_{\bar A}(\uu)}\right)\\
&=\displaystyle\intL^\infty_{-|\bar A^{-1}\uu|^2/\nu_{\bar A}(\uu)} \frac{\nu_{\bar A}(\uu)}{2|\mathbf a|_1t} {\color{black}R_{E,A}}f(\uu,t)e^{-i\frac{t^2-|\bar A^{-1}\uu|^2}{\nu_{\bar A}(\uu)}\sigma}d\left(\frac{t^2-|\bar A^{-1}\uu|^2}{\nu_{\bar A}(\uu)}\right)\\
&=\displaystyle\frac{1}{|\mathbf a|_1}\intL^\infty_{0}  {\color{black}R_{E,A}}f(\uu,t)e^{-i\frac{t^2-|\bar A^{-1}\uu|^2}{\nu_{\bar A}(\uu)}\sigma}dt,
\end{array}
$$
where in the second line, we used Theorem~\ref{thm:mainrelation}.
Setting $\boldsymbol\alpha=-2\sigma\bar A^{-1}\uu/\nu_{\bar A}(\uu)\in\RR^{n-1}$ and $\beta=\sigma/\nu_{\bar A}(\uu)\in\RR$ completes our proof.
\end{proof}

Taking the inverse Fourier transform of a function $k$ and using equation (\ref{eq:ffromk}), we have the following inversion formula:
\begin{cor}
Let $f\in C^\infty(\RR^n)$ be even in $x_n$ and have compact support in $\RR^{n-1}\times\RR\setminus\{0\}$.
Then $f(\xx)$ can be reconstructed as follows:
\begin{equation}\label{eq:kinversion}
 \displaystyle\frac{(-1)^{n-1}|x_n| }{2\pi^{n} |\mathbf a|_1^2}\intL_{\RR^{n-1}}\left.\Lambda_t ({\color{black}R_{E,A}}f)( \boldsymbol\alpha, t)\right|_{t=|\bar A^{-1}(\boldsymbol\alpha-\xx')|^2+a_n^{-2}x_n^2}d\boldsymbol\alpha,
\end{equation}
where 
$$
\Lambda_t h(t)=\intR\intL\half e^{i(t^2-\tau^2)\beta}|\beta|^{n-1}h(\tau)d\tau d\beta.
$$
\end{cor}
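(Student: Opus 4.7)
The plan is to invert the projection slice identity in Theorem~\ref{thm:projectionslice} by $n$-dimensional Fourier inversion to recover $k$, and then to read off $f$ using equation~\eqref{eq:ffromk}.

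Concretely, I would start from
$$k(\zz) = (2\pi)^{-n}\int_{\RR^{n-1}}\int_\RR \hat k(\boldsymbol\alpha, \beta)\, e^{i(\boldsymbol\alpha\cdot\zz' + \beta z_n)}\, d\beta\, d\boldsymbol\alpha,$$
substitute the expression for $\hat k$ from Theorem~\ref{thm:projectionslice}, and simultaneously set $\zz' = \bar A^{-1}\xx'$ and $z_n = |A^{-1}\xx|^2 = |\bar A^{-1}\xx'|^2 + a_n^{-2}x_n^2$; multiplying the result by $|x_n|a_n^{-1}$ then produces $f(\xx)$ by~\eqref{eq:ffromk}.

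To expose the structure of $\Lambda_t$, I would then change variables in the $\boldsymbol\alpha$-integral via $\uu = -\bar A\boldsymbol\alpha/(2\beta)$, the substitution dictated by the first argument of $R_{E,A}f$ in Theorem~\ref{thm:projectionslice}. The Jacobian has magnitude $|2\beta|^{n-1} a_n/|\mathbf a|_1$, which combines with the $(2\pi)^{-n}$ from Fourier inversion, the $|\mathbf a|_1^{-1}$ from Theorem~\ref{thm:projectionslice}, and the $|x_n|a_n^{-1}$ from~\eqref{eq:ffromk} to produce the stated prefactor $|x_n|/(2\pi^n |\mathbf a|_1^2)$ together with the $|\beta|^{n-1}$ weight appearing in $\Lambda_t$; the sign $(-1)^{n-1}$ is tracked through the determinant of $-2\beta\bar A^{-1}$. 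The key algebraic simplification happens in the phase: the four contributions $|\boldsymbol\alpha|^2/(4\beta) = \beta|\bar A^{-1}\uu|^2$, $-\beta t^2$, $\boldsymbol\alpha\cdot\zz' = -2\beta\bar A^{-1}\uu\cdot\bar A^{-1}\xx'$, and $\beta z_n$ collapse, after completing the square in $\uu - \xx'$ and using $|A^{-1}\xx|^2 = |\bar A^{-1}\xx'|^2 + a_n^{-2}x_n^2$, to the single expression $\beta(|\bar A^{-1}(\uu-\xx')|^2 + a_n^{-2}x_n^2 - t^2)$.

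The remaining triple integral over $(\uu,\beta,t)$ is then recognised, by the definition of $\Lambda_t$, as $\int_{\RR^{n-1}}\Lambda_t(R_{E,A}f)(\uu,\cdot)$ evaluated at the prescribed value, yielding~\eqref{eq:kinversion}. The main technical obstacle will be justifying the interchanges of integrals and the Fourier inversion step, since the weight $|\beta|^{n-1}$ rules out absolute convergence and $R_{E,A}f$ need not decay in $\uu$. However, the hypothesis that $f$ is smooth and compactly supported in $\RR^{n-1}\times\RR\setminus\{0\}$ forces $k$ to be smooth with compact support in $\mathfrak A$, so $\hat k$ is Schwartz and Fourier inversion holds in the classical pointwise sense; the remaining interchanges can be legitimised by a standard regularisation (e.g.\ inserting a Gaussian factor $e^{-\varepsilon\beta^2}$ and passing $\varepsilon\to 0$).
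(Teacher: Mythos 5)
Your proposal follows essentially the same route as the paper's own proof: Fourier inversion of $\hat k$, substitution of the projection-slice identity from Theorem~\ref{thm:projectionslice}, the change of variables $\boldsymbol\alpha\mapsto -2\beta\bar A^{-1}\uu$ with its $|2\beta|^{n-1}a_n/|\mathbf a|_1$ Jacobian, the completion of the square in the phase, and the identification of the resulting triple integral with $\Lambda_t(R_{E,A}f)$; your bookkeeping of the constants and of the sign $(-1)^{n-1}$ matches the paper's. The only difference is that you additionally flag and sketch a justification of the Fourier inversion and the interchanges of integrals, which the paper passes over in silence.
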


\begin{proof}
We get for $x_n>0$,
$$
  \begin{array}{ll}
   f(\xx)&=\displaystyle\frac{x_n }{a_n} k(\bar A^{-1}\xx',|A^{-1}\xx|^2)\\
&=\displaystyle\displaystyle\frac{x_n }{(2\pi)^n a_n}\intL_{\RR^{n-1}} \intR e^{i\bar A^{-1}\xx' \cdot \boldsymbol\alpha}e^{i\beta |A^{-1}\xx|^2}\hat k(\boldsymbol\alpha,\beta) d\beta d\boldsymbol\alpha\\
&=\displaystyle\displaystyle\frac{x_n }{(2\pi)^n a_n|\mathbf a|_1}\intL_{\RR^{n-1}}\intR e^{i\frac{|\boldsymbol\alpha|^2}{4\beta}}e^{i\bar A^{-1}\xx' \cdot \boldsymbol\alpha}e^{i\beta |A^{-1}\xx|^2}G\left(-\frac{\bar A \boldsymbol\alpha}{2\beta}, \beta\right) d\beta d\boldsymbol\alpha,
  \end{array}
$$
 where in the first line, we used equation (\ref{eq:ffromk}) and in the last line, we used Theorem~\ref{thm:projectionslice} and 
 $$
G(\uu,w)=\intL^\infty_0{\color{black}R_{E,A}} f(\uu,t)e^{-iwt^2}dt.
$$
 The evenness of $f$ in $x_n$ gives us
 $$
f(\xx)= \displaystyle\frac{|x_n| }{(2\pi)^n a_n|\mathbf a|_1}\intL_{\RR^{n-1}}\intR e^{i\frac{|\boldsymbol\alpha|^2}{4\beta}} e^{i\boldsymbol\alpha\cdot(\bar A^{-1}\xx')}e^{i\beta |A^{-1}\xx|^2}G\left(-\frac{\bar A \boldsymbol\alpha}{2\beta}, \beta\right) d\beta d\boldsymbol\alpha .
$$
Changing the variables $-\bar A\aalpha/(2\beta)\to\aalpha$, $f(\xx)$ is equal to
$$
 \displaystyle\frac{(-2)^{n-1}|x_n| }{(2\pi)^n |\mathbf a|_1^2}\intL_{\RR^{n-1}}\intR e^{i\beta(|\bar A^{-1}\boldsymbol\alpha|^2-2(\bar A^{-1}\boldsymbol\alpha)\cdot(\bar A^{-1}\xx')+|A^{-1}\xx|^2)}G\left( \boldsymbol\alpha, \beta\right) |\beta|^{n-1} d\beta d\boldsymbol\alpha .
 $$
By definition of $G$, $f(\xx)$ can be determined through
$$
 \displaystyle\frac{(-1)^{n-1}|x_n| }{2\pi^{n} |\mathbf a|_1^2}\intL_{\RR^{n-1}}\intR\intL\half e^{i\beta(|\bar A^{-1}\boldsymbol\alpha-\bar A^{-1}\xx'|^2+a_n^{-2}x_n^2-t^2)}{\color{black}R_{E,A}}f( \boldsymbol\alpha, t) |\beta|^{n-1}dtd\beta d\boldsymbol\alpha  .
$$

\end{proof}
When $n=2$, equation \eqref{eq:kinversion} becomes
$$
 \begin{array}{ll}
 f(\xx)&=\displaystyle\frac{|x_2| }{2\pi^{2} a_1^2a_2^2}\intR\intR\intL\half e^{i\beta(|a_1^{-1}(\alpha-x_1)|^2+a_2^{-2}x_2^2-t^2)}{\color{black}R_{E,A}}f(\alpha, t) |\beta|dtd\beta d\alpha\\
   &=-\displaystyle\frac{|x_2| }{\pi^{2} a_1^2a_2^2}\intR\intL\half \frac{{\color{black}R_{E,A}}f(\alpha, t)}{(|a_1^{-1}(\alpha-x_1)|^2+a_2^{-2}x_2^2-t^2)^2}dt d\alpha,
   \end{array}
$$
since in the distribution sense,
$$
\intR e^{ix\beta}|\beta|d\beta=-\frac{2}{x^2}.
$$
\section{2-dimensional numerical implementation}
Here we discuss the results of 2-dimensional numerical implementations.

In 2-dimension, equation~\eqref{eq:mainrel} becomes
\begin{equation}\label{eq:refrk2}
{\color{black}R_{E,A}}f(u,t)=\frac{2 a_1a_2t}{\sqrt{1+4|a_1^{-1}u|^2}}Rk\left(\frac{(-2a_1^{-1}u,1)}{\sqrt{1+4|a_1^{-1}u|^2}},\frac{t^2-|a_1^{-1}u|^2}{\sqrt{1+4|a_1^{-1}u|^2}}\right).
\end{equation}
Setting $(\cos\theta,\sin\theta)=(-2a_1^{-1}u,1)/\sqrt{1+4|a_1^{-1}u|^2}\in S^{1}$ and 
$$
s=\frac{t^2-|a_1^{-1}u|^2}{\sqrt{1+4|a_1^{-1}u|^2}} 
$$
in equation~(\ref{eq:refrk2}), we have
\begin{equation*}
 2R k(\cos\theta,\sin\theta,s)=\displaystyle\frac{|\csc\theta|}{a_1a_2h(s,\theta)}{\color{black}R_{E,A}}f\left(-\frac{a_1\cot\theta}2,h(s,\theta)\right)\mbox{ if }s\csc\theta>-\frac{\cot^2\theta}4
\end{equation*}
where  
$$
h(s,\theta)=\sqrt{s\csc\theta+\frac{\cot^2\theta}4}.
$$ 
(While $Rk(\sin\theta,\cos\theta,s)$ is used in Theorem~\ref{thm:mainrelation}, $Rk(\cos\theta,\sin\theta,s)$ is used to utilize the built-in function in \textsc{Matlab}. 
Hence a small change is required.)
Again, since $k$ has compact support in $\{\zz\in\RR^2:|z_1|^2<z_2\}$, the plane perpendicular to $\boldsymbol{e_\theta}=(\cos\theta,\sin\theta)$ with distance from the origin less than $-\frac{\cot^2\theta}{4\csc\theta}$ does not intersect the compact support of $k$.
Therefore, we have
\begin{equation}\label{eq:mainrelation}
 2R k(\boldsymbol{e_\theta},s)=\left\{\begin{array}{ll}\displaystyle\frac{|\csc\theta|}{a_1a_2h(s,\theta)}{\color{black}R_{E,A}}f\left(-\frac{a_1\cot\theta}2,h(s,\theta)\right)&\mbox{ if }s\csc\theta>-\dfrac{\cot^2\theta}4,\\
0&\mbox{otherwise,}\end{array}\right.
\end{equation}
First of all, $a_1$ and $a_2$ were set to be 0.8 and 1, respectively. 
In the experiments presented here we use the phantom shown in Figure~\ref{fig:spherical} (a).
The phantom, supported within the rectangle $[-1,1]\times[-1,1]$, is the sum of eight characteristic functions of disks.
Notice that it has to be even with respect to the $x_2$-axis and there are four characteristic functions of disks centered at $(0.2,0.4)$, $(0,0.5)$, $(-0.3,0.3)$, and $(-0.5,0.2)$ with radii 0.2 0.15 0.05 and 0.05, whose values are 1, 0.5, 1.5, and 2 above the $x_1$-axis.
Hence we also include their reflection below the $x_1$-axis.
(Actually, our phantom has support in 
$$
\{\xx\in\RR^2:(x_1/a_1)^2+((x_1/a_1)^2+x_2^2)^2<1\}.
$$
This implies that $k$ has support in the unit ball and this makes it sufficient to consider the range $[-1,1]$ in $s$.) 
The $256\times256$ images are used in Figure~\ref{fig:spherical}.
To reconstruct the image in Figure~\ref{fig:spherical} (b), we have $256\times256$ projections for $\theta\in[0,2\pi]$ and $s\in[-1,1]$ in equation~(\ref{eq:mainrelation}).
All projections are computed by numerical integration.
After finding the function $k$ using the usual inversion code for the regular Radon transform, we obtain the function $f$ using equation~(\ref{eq:ffromk}).
(When using the inversion code for the regular Radon transform, the built-in function ``iradon'' in \textsc{Matlab} was used. 
The function ``iradon'' is the inversion of the built-in function ``radon'' in \textsc{Matlab} which considers the number of the pixels where the line passes through.
Thus when computing ${\color{black}R_{E,A}}f$, we also considers the number of the pixels where the ellipse passes through.
We used the default version of the function in which the filter, whose aim is to deemphasize high frequencies, is set to the Ram-Lak filter and the interpolation is set to be linear.)

While Figure~\ref{fig:spherical} (b) demonstrates the image reconstructed from the exact data, Figure~\ref{fig:spherical} (c) shows the absolute value of the reconstruction from noisy data.
The noise is modeled by normally distributed random numbers and this is scaled so that its norm was equal to 5$\%$ of the norm of the exact data.
In Figure~\ref{fig:spherical} (c) the noisy data is modeled by adding the noise values scaled to 5$\%$ of the norm of the exact data to the exact data.
In Figure~\ref{fig:sphericalsurface}, the surface plots of Figure~\ref{fig:spherical} (a) and (b) are provided.
Another phantom and reconstruction are shown in Figure~\ref{fig:sphericalphantom}.
The reconstructed two dimensional data sets consisting of 256$\times$256 projections using the implemented inversion formula have been computed in less than one second (around 0.3 second) on a Intel(R) Cor(TM) i5-3470 CPU @ 3.20 GHz.
\begin{figure}[here]
\begin{center}
        \subfigure[]{
            \includegraphics[width=0.3\textwidth]{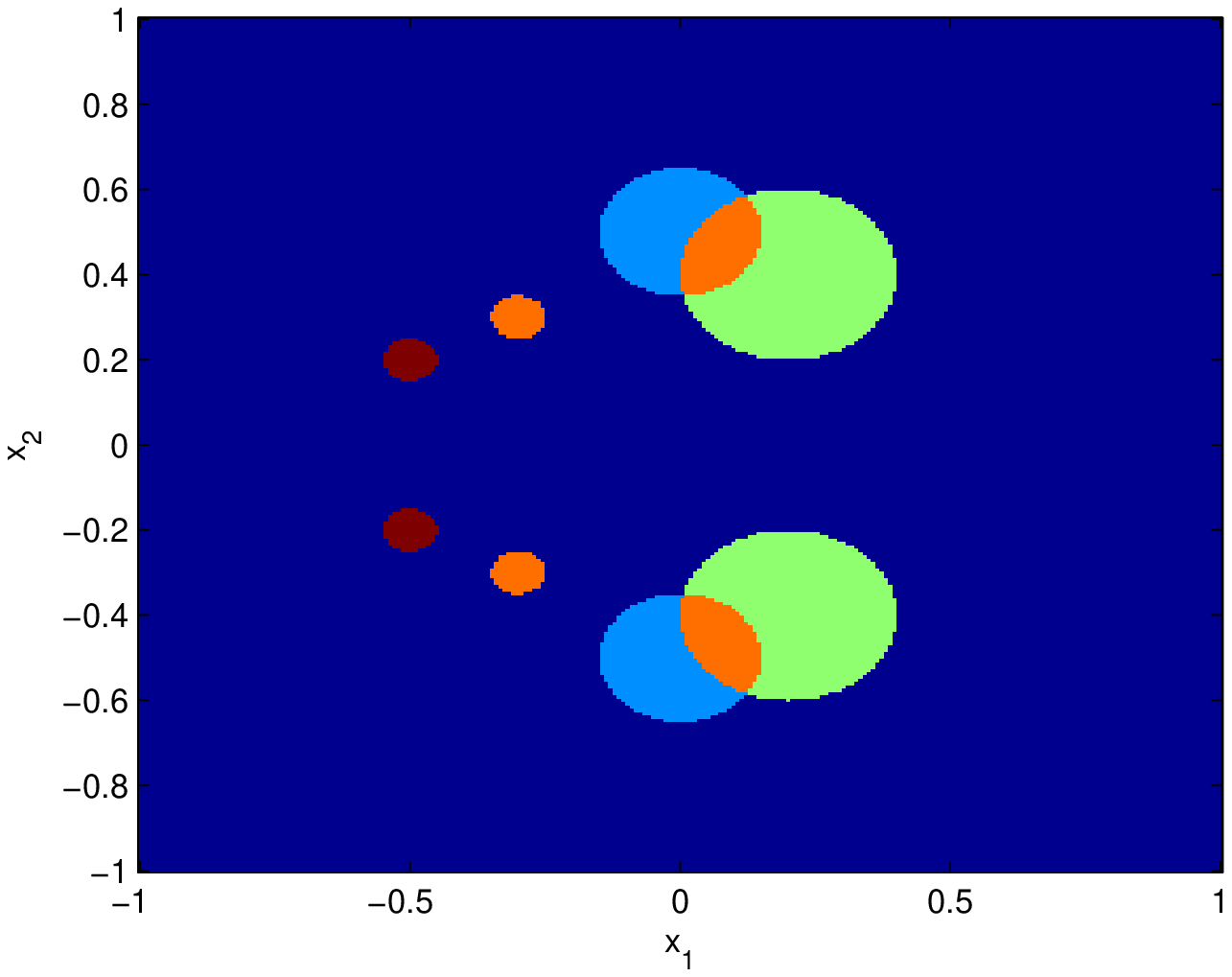}        }%
        \subfigure[]{
     \includegraphics[width=0.3\textwidth]{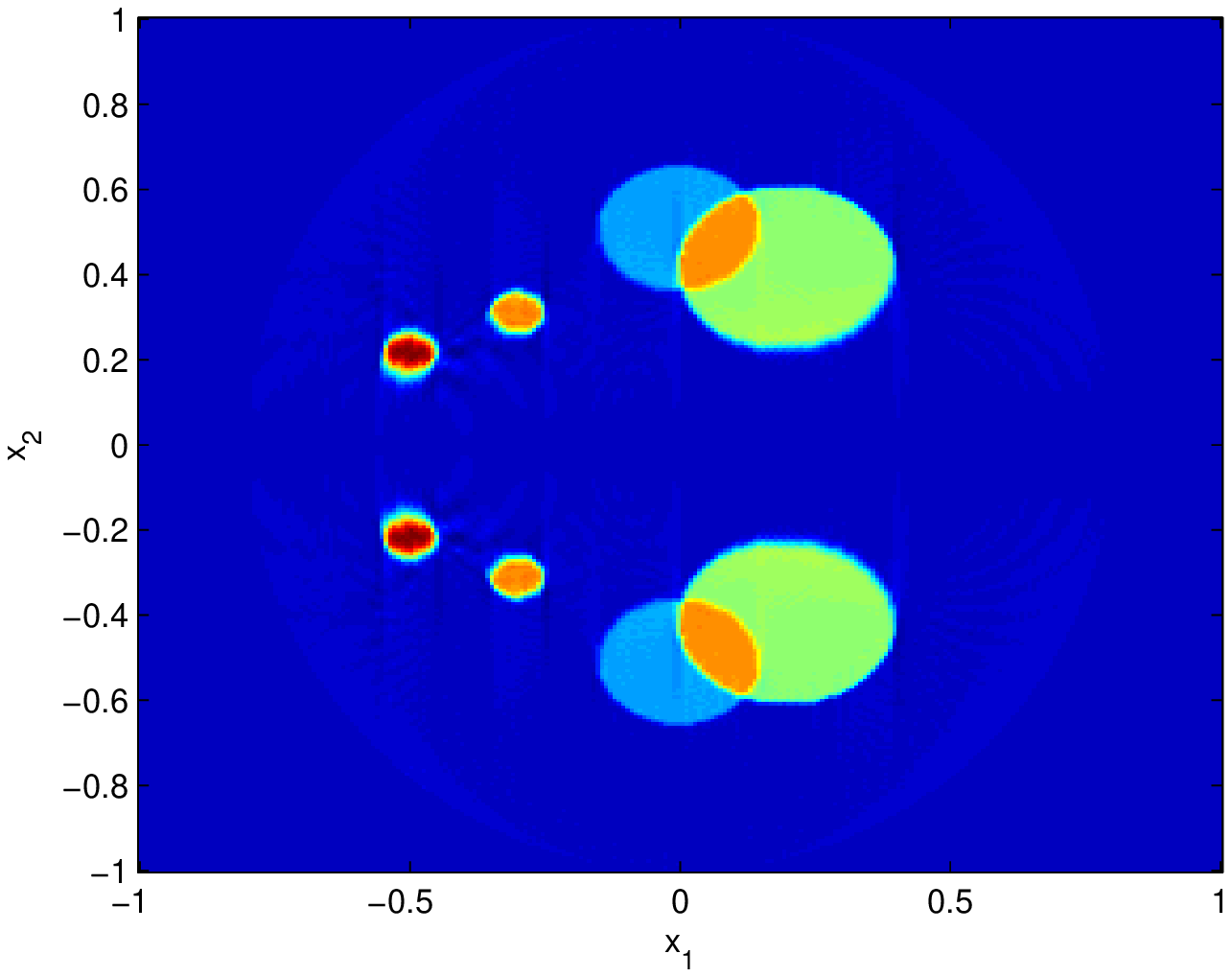}}
             \subfigure[]{
     \includegraphics[width=0.3\textwidth]{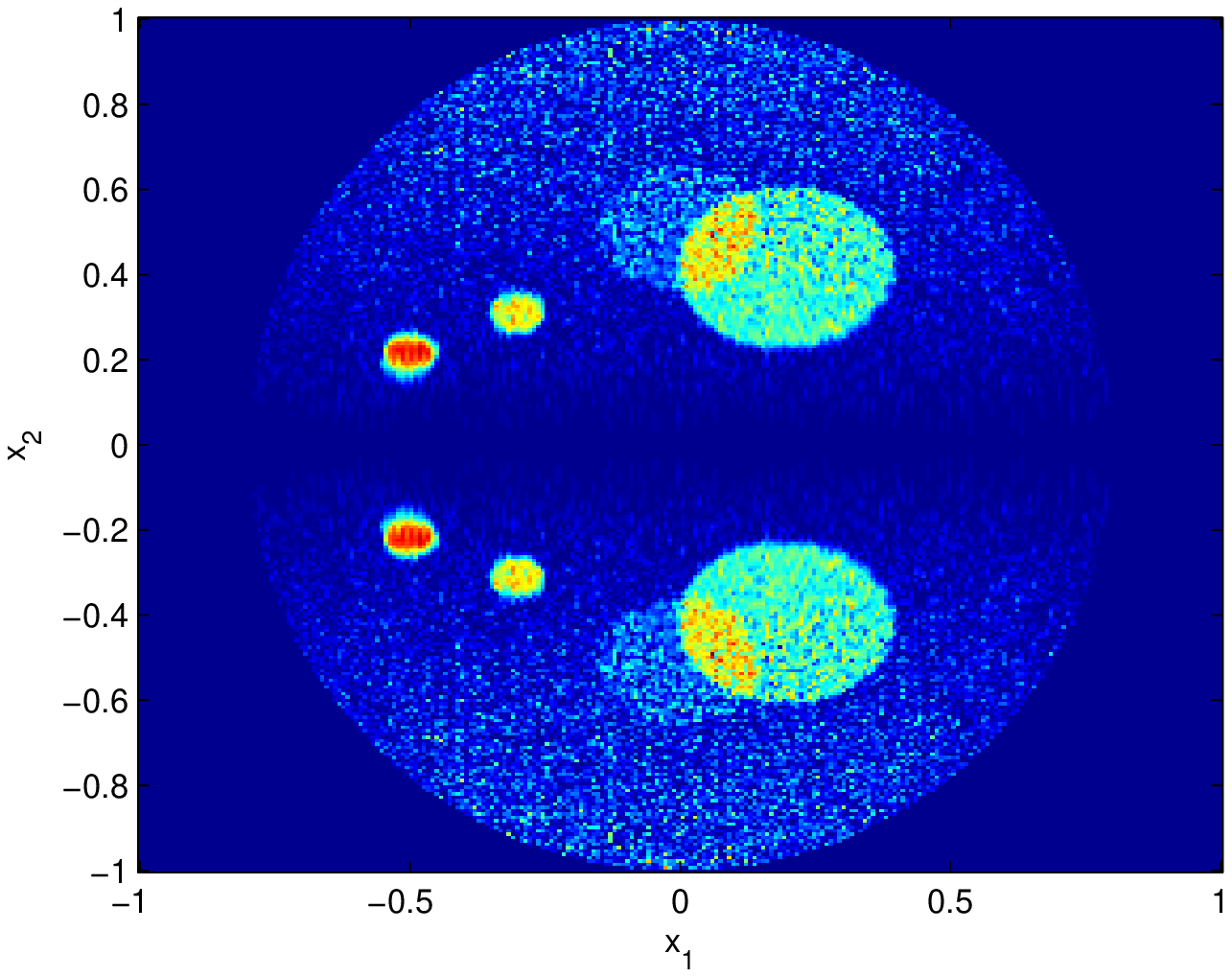}}
     \end{center}
\caption{Reconstructions in 2 dimensions: (a) the phantom, (b) the reconstruction form exact data, and (c) the reconstruction from noisy data}
\label{fig:spherical}
\end{figure} 
\begin{figure}[here]
\begin{center}
        \subfigure[]{
            \includegraphics[width=0.45\textwidth]{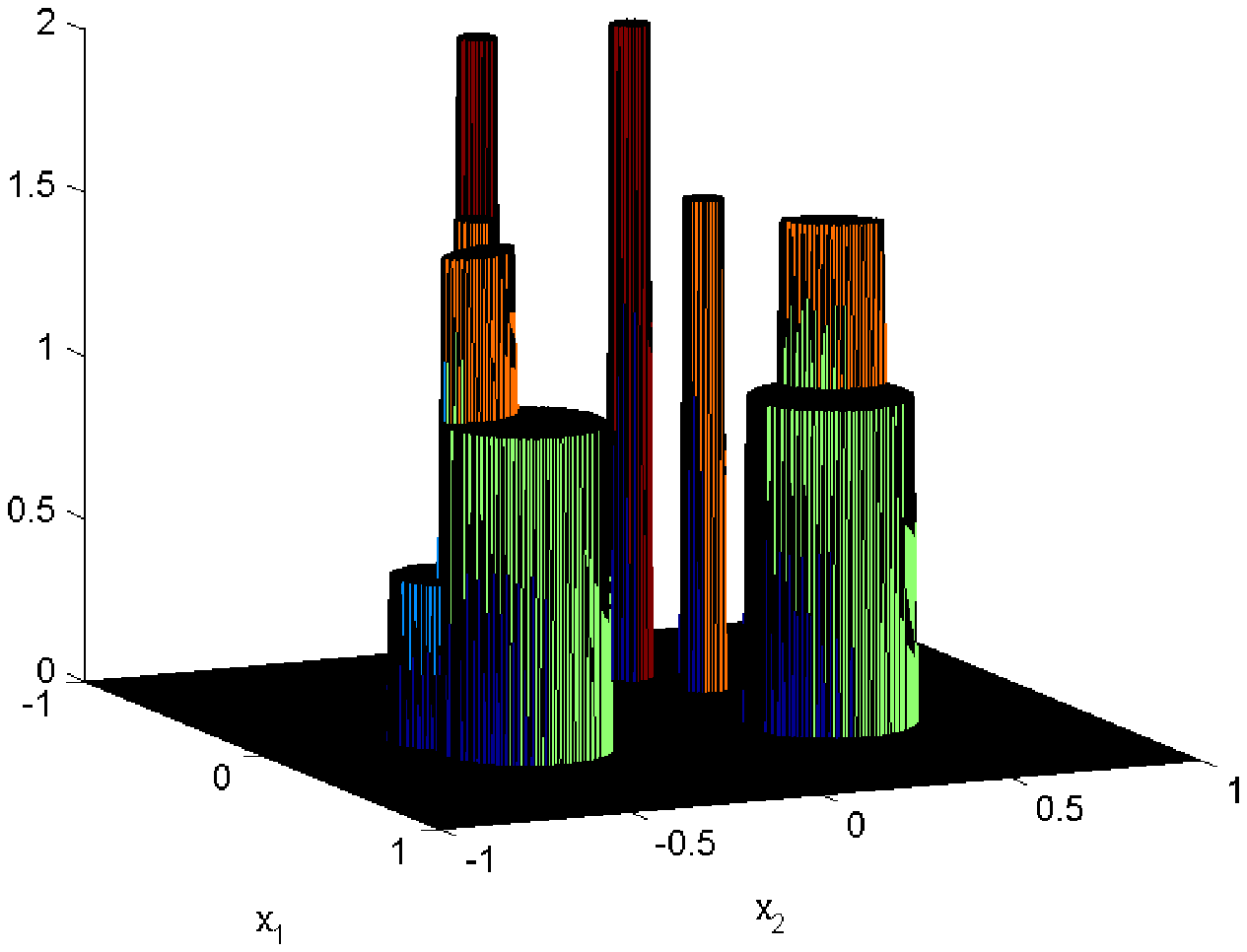}        }%
        \subfigure[]{
     \includegraphics[width=0.45\textwidth]{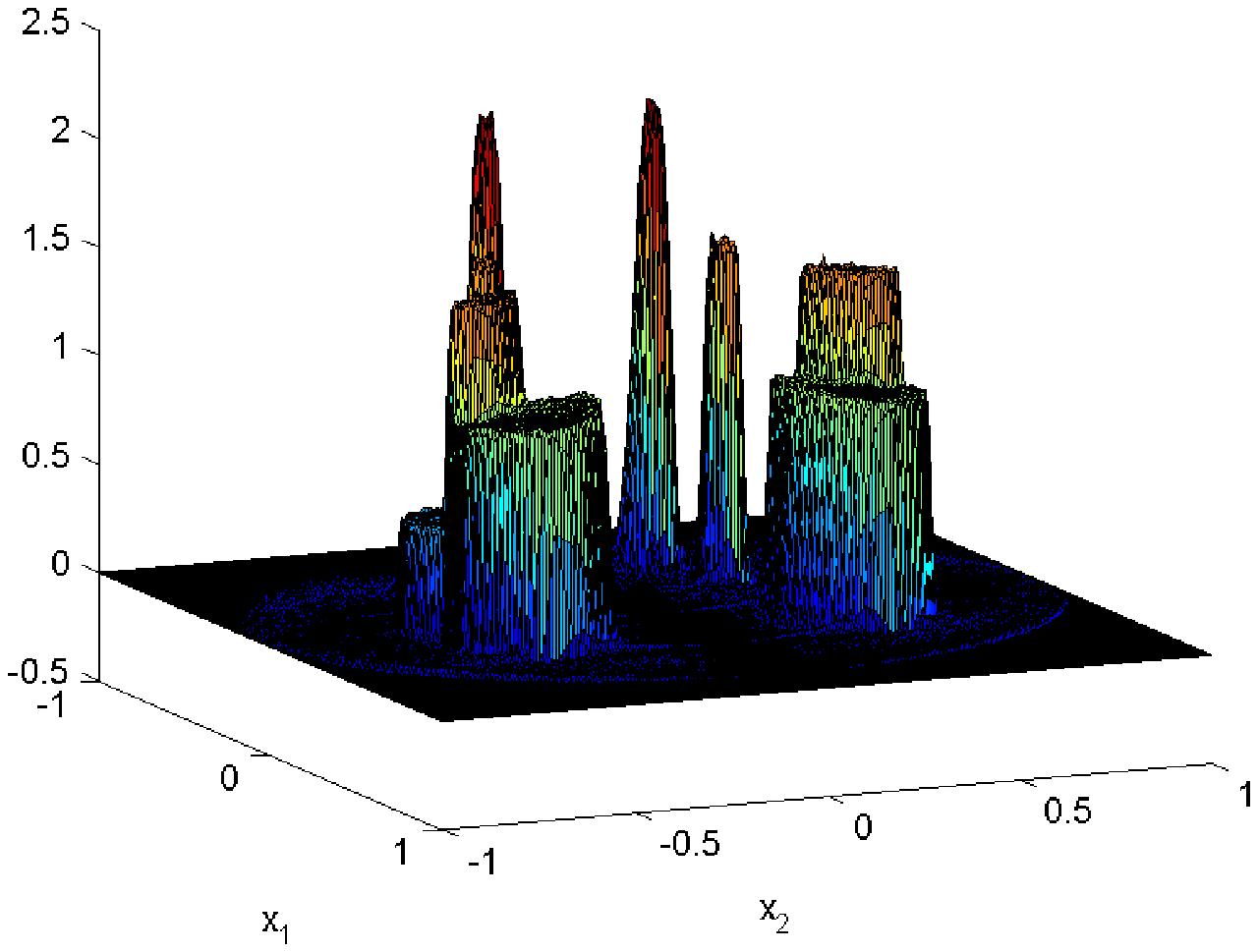}}

\end{center}
\caption{Surface plots (a) the phantom and (b) the reconstruction from exact data}
\label{fig:sphericalsurface}
\end{figure} 
\begin{figure}[here]
\begin{center}
        \subfigure[]{
            \includegraphics[width=0.45\textwidth]{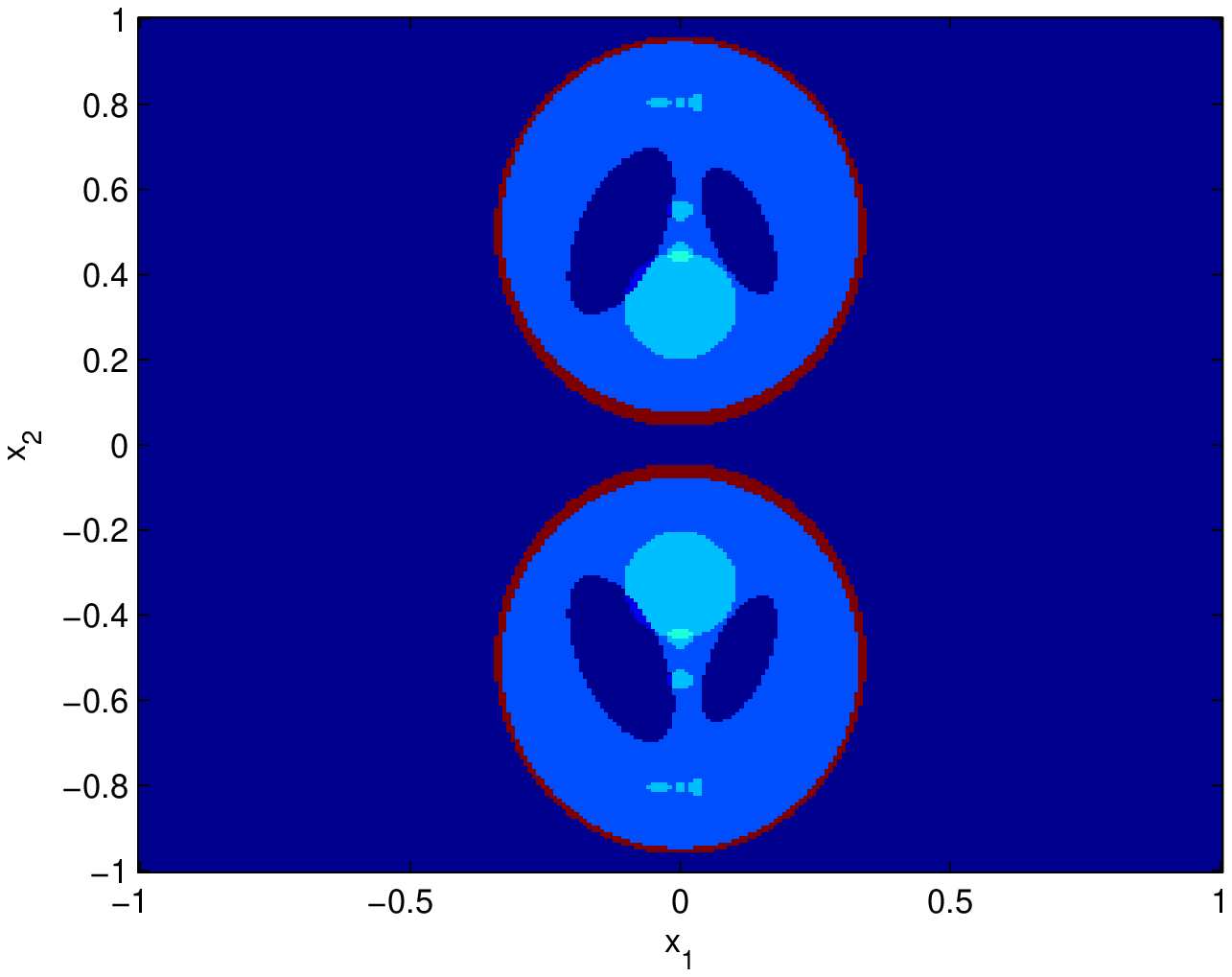}        }%
        \subfigure[]{
     \includegraphics[width=0.45\textwidth]{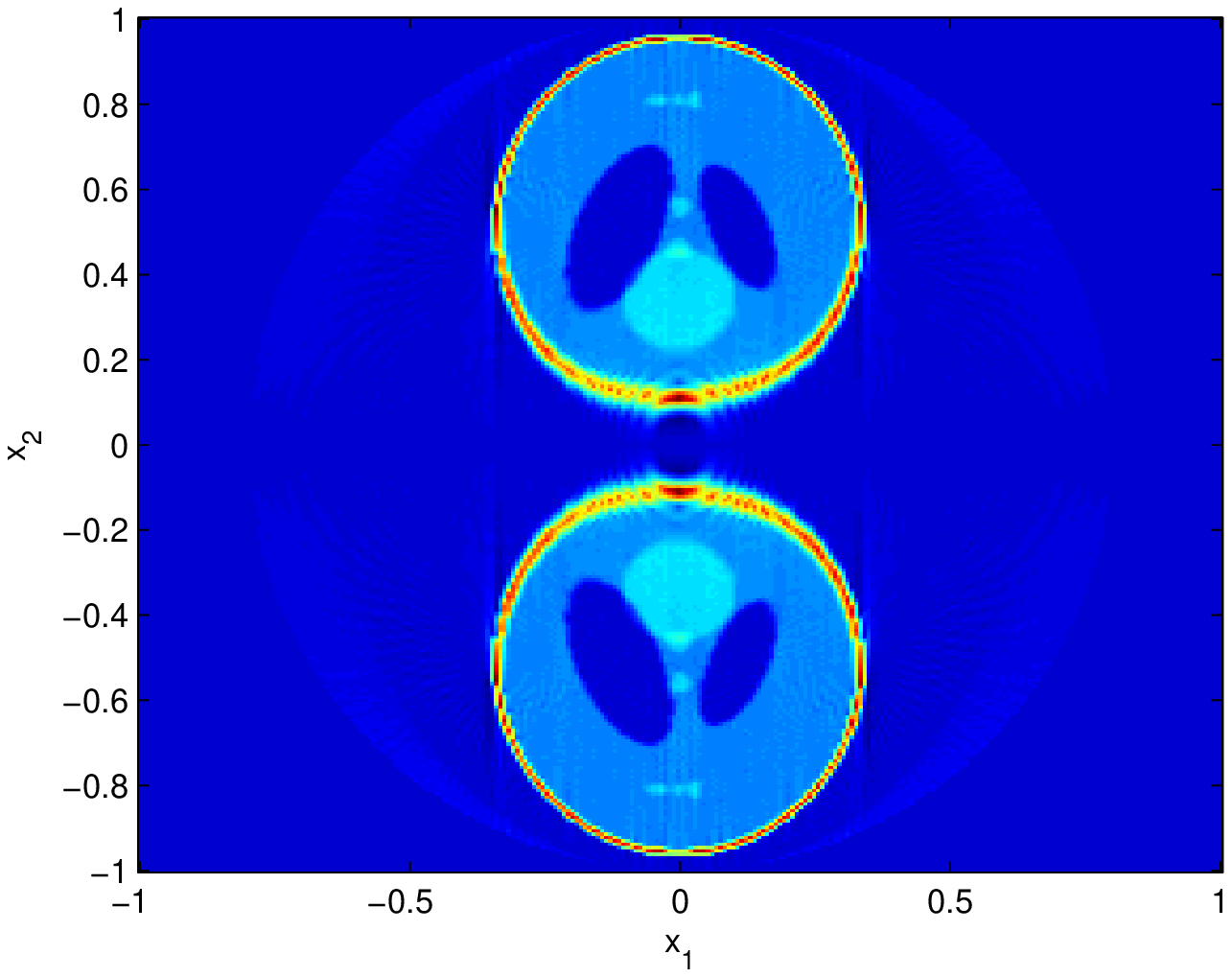}}
     \end{center}
\caption{Reconstruction in 2D: (a) the phantom and (b) the reconstruction from exact data }
\label{fig:sphericalphantom}
\end{figure} 
\section{Conclusion}
This paper is devoted to the study of the elliptical Radon transform arising in seismic imaging.
We provide an inversion formula for the elliptical Radon transform by reducing this transform to the regular Radon transform.
Also, we demonstrate our algorithm by providing numerical simulations.

\section*{Acknowledgements}
The first author thanks C. Jung for fruitful discussions.
The first author was supported in part by Basic Science Research Program through the National Research Foundation of Korea(NRF) funded by the Ministry of science, ICT and future planning (2015R1C1A1A02037662)).

\end{document}